\definecolor{webgreen}{rgb}{0,.5,0}
\definecolor{webbrown}{rgb}{.6,0,0}
\begin{document}

\theoremstyle{plain}
\newtheorem{theorem}{Theorem}
\newtheorem{corollary}[theorem]{Corollary}
\newtheorem{lemma}[theorem]{Lemma}
\newtheorem{proposition}[theorem]{Proposition}

\theoremstyle{definition}
\newtheorem{definition}[theorem]{Definition}
\newtheorem{example}[theorem]{Example}
\newtheorem{conjecture}[theorem]{Conjecture}

\theoremstyle{remark}
\newtheorem{remark}[theorem]{Remark}

\begin{center}
\vskip 1cm{\LARGE\bf Rapidly Convergent Summation Formulas\\
\vskip .1in involving Stirling Series}
\vskip 1cm
\large
Raphael Schumacher\\
MSc ETH Mathematics\\
Switzerland\\
\href{raphschu@ethz.ch}{\tt raphschu@ethz.ch}\\
\end{center}

\vskip .2 in

\begin{abstract}
This paper presents a family of rapidly convergent summation formulas for various finite sums of analytic functions. These summation formulas are obtained by applying a series acceleration transformation involving Stirling numbers of the first kind to the asymptotic, but divergent, expressions for the corresponding sums coming from the Euler-Maclaurin summation formula. While it is well-known that the expressions obtained from the Euler-Maclaurin summation formula diverge, our summation formulas are all very rapidly convergent and thus computationally efficient.
\end{abstract}

\section{Introduction}
\label{sec:introduction}

In this paper we will use asymptotic series representations for finite sums of various analytic functions, coming from the Euler-Maclaurin summation formula \cite{13,17}, to obtain rapidly convergent series expansions for these finite sums involving Stirling series \cite{1,2}. The key tool for doing this will be the so called Weniger transformation \cite{1} found by J. Weniger, which transforms an inverse power series into a Stirling series.\newline
\noindent For example, two of our three summation formulas for the sum $\sum_{k=0}^{n}\sqrt{k}$, where $n\in\mathbb{N}$, look like
\begin{displaymath}
\begin{split}
\sum_{k=0}^{n}\sqrt{k}&=\frac{2}{3}n^{\frac{3}{2}}+\frac{1}{2}\sqrt{n}-\frac{1}{4\pi}\zeta\left(\frac{3}{2}\right)+\sqrt{n}\sum_{k=1}^{\infty}(-1)^{k}\frac{\sum_{l=1}^{k}(-1)^{l}\frac{(2l-3)!!}{2^{l}(l+1)!}B_{l+1}S_{k}^{(1)}(l)}{(n+1)(n+2)\cdots(n+k)}\\
&=\frac{2}{3}n^{\frac{3}{2}}+\frac{1}{2}\sqrt{n}-\frac{1}{4\pi}\zeta\left(\frac{3}{2}\right)+\frac{\sqrt{n}}{24(n+1)}+\frac{\sqrt{n}}{24(n+1)(n+2)}+\frac{53\sqrt{n}}{640(n+1)(n+2)(n+3)}\\
&\quad+\frac{79\sqrt{n}}{320(n+1)(n+2)(n+3)(n+4)}+\ldots,
\end{split}
\end{displaymath}
\noindent or
\begin{displaymath}
\begin{split}
\sum_{k=0}^{n}\sqrt{k}&=\frac{2}{3}n^{\frac{3}{2}}+\frac{1}{2}\sqrt{n}-\frac{1}{4\pi}\zeta\left(\frac{3}{2}\right)+\frac{1}{24\sqrt{n}}+\sum_{k=1}^{\infty}(-1)^{k}\frac{\sum_{l=1}^{k}(-1)^{l}\frac{(2l-1)!!}{2^{l+1}(l+2)!}B_{l+2}S_{k}^{(1)}(l)}{\sqrt{n}(n+1)(n+2)\cdots(n+k)}\\
&=\frac{2}{3}n^{\frac{3}{2}}+\frac{1}{2}\sqrt{n}-\frac{1}{4\pi}\zeta\left(\frac{3}{2}\right)+\frac{1}{24\sqrt{n}}-\frac{1}{1920\sqrt{n}(n+1)(n+2)}-\frac{1}{640\sqrt{n}(n+1)(n+2)(n+3)}\\
&\quad-\frac{259}{46080\sqrt{n}(n+1)(n+2)(n+3)(n+4)}-\frac{115}{4608\sqrt{n}(n+1)(n+2)(n+3)(n+4)(n+5)}-\ldots,
\end{split}
\end{displaymath}
where the $B_{l}$'s are the Bernoulli numbers and $S_{k}^{(1)}(l)$ denotes the Stirling numbers of the first kind.
All other formulas in this article have a similar shape.\newline
\noindent We have searched our resulting formulas in the literature and on the internet, but we could find only three of them; therefore we believe that all others are new and our method to obtain them has not been recognized before.
The first of these three previously known formulas is
\begin{displaymath}
\begin{split}
\sum_{k=1}^{n}\frac{1}{k^{2}}&=\zeta(2)-\frac{1}{n}+\frac{1}{n^{2}}-\sum_{k=1}^{\infty}\frac{1}{k+1}\cdot\frac{k!}{n(n+1)(n+2)\cdots(n+k)}\\
&=\zeta(2)-\frac{1}{n}+\frac{1}{n^{2}}-\frac{1}{2n(n+1)}-\frac{2}{3n(n+1)(n+2)}-\frac{3}{2n(n+1)(n+2)(n+3)}\\
&\quad-\frac{24}{5n(n+1)(n+2)(n+3)(n+4)}-\ldots
\end{split}
\end{displaymath}
and was already known to Stirling around 1730 \cite{2}. This formula follows directly from the identity \cite{2}
\begin{displaymath}
\begin{split}
\sum_{k=n}^{\infty}\frac{1}{k^{2}}&=\frac{1}{n}+\sum_{k=1}^{\infty}\frac{1}{k+1}\cdot\frac{k!}{n(n+1)(n+2)\cdots(n+k)}\\
&=\frac{1}{n}+\frac{1}{2n(n+1)}+\frac{2}{3n(n+1)(n+2)}+\frac{3}{2n(n+1)(n+2)(n+3)}\\
&\quad+\frac{24}{5n(n+1)(n+2)(n+3)(n+4)}+\ldots.
\end{split}
\end{displaymath}
The second was obtained by Gregorio Fontana around 1780 \cite{3,4} and reads
\begin{displaymath}
\begin{split}
\sum_{k=1}^{n}\frac{1}{k}&=\log(n)+\gamma+\frac{1}{2n}-\sum_{k=1}^{\infty}\frac{\frac{(-1)^{k}}{k+1}\sum_{l=0}^{k+1}\frac{1}{l+1}S_{k+1}^{(1)}(l)}{n(n+1)(n+2)\cdots(n+k)}\\
&=\log(n)+\gamma+\frac{1}{2n}-\sum_{k=1}^{\infty}\frac{(-1)^{k}k!C_{k+1}}{n(n+1)(n+2)\cdots(n+k)}\\
&=\log(n)+\gamma+\frac{1}{2n}-\frac{1}{12n(n+1)}-\frac{1}{12n(n+1)(n+2)}-\frac{19}{120n(n+1)(n+2)(n+3)}\\
&\quad-\frac{9}{20n(n+1)(n+2)(n+3)(n+4)}-\ldots,
\end{split}
\end{displaymath}

\noindent where the \emph{Gregory numbers} $C_{k}$ for $k\in\mathbb{N}_{0}$ \cite{3,5,6,7} are defined by

\begin{displaymath}
\begin{split}
C_{k}:&=\frac{1}{k!}\int_{0}^{1}(x)_{\underline{k}}dx\\
&=\frac{1}{k!}\int_{0}^{1}x(x-1)\cdots(x-k+1)dx\\
&=\frac{1}{k!}\sum_{l=0}^{k}\frac{1}{l+1}S_{k}^{(1)}(l).
\end{split}
\end{displaymath}
The last one is a convergent version of Stirling's formula for the logarithm of the factorial function \cite{8}, namely
\begin{displaymath}
\begin{split}
\sum_{k=1}^{n}\log(k)&=n\log(n)-n+\frac{1}{2}\log\left(2\pi\right)+\frac{1}{2}\log(n)+\sum_{k=1}^{\infty}\frac{\frac{(-1)^{k}}{2k}\sum_{l=1}^{k}\frac{(-1)^{l}l}{(l+1)(l+2)}S_{k}^{(1)}(l)}{(n+1)(n+2)\cdots(n+k)}\\
&=n\log(n)-n+\frac{1}{2}\log\left(2\pi\right)+\frac{1}{2}\log(n)+\sum_{k=1}^{\infty}\frac{a_{k}}{(n+1)(n+2)\cdots(n+k)}\\
&=n\log(n)-n+\frac{1}{2}\log\left(2\pi\right)+\frac{1}{2}\log(n)+\frac{1}{12(n+1)}+\frac{1}{12(n+1)(n+2)}\\
&\quad+\frac{59}{360(n+1)(n+2)(n+3)}+\frac{29}{60(n+1)(n+2)(n+3)(n+4)}+\ldots,
\end{split}
\end{displaymath}
where the numbers $a_{k}$ for $k\in\mathbb{N}$ \cite{8,9,10} are defined by
\begin{displaymath}
\begin{split}
a_{k}:&=\frac{1}{k}\int_{0}^{1}(x)_{k}\left(x-\frac{1}{2}\right)dx\\
&=\frac{1}{2k}\sum_{l=1}^{k}\frac{l}{(l+1)(l+2)}\left|S_{k}^{(1)}(l)\right|\\
&=\frac{(-1)^{k}}{2k}\sum_{l=1}^{k}\frac{(-1)^{l}l}{(l+1)(l+2)}S_{k}^{(1)}(l).
\end{split}
\end{displaymath}
and was also found around 1800.

\section{Definitions}
\label{sec:Definitions}

\begin{definition}(Fractional part $\{x\}$ of $x$)\newline
\noindent Let $\lfloor x\rfloor$ denote the \emph{floor function} evaluated at the point $x\in\mathbb{R}$, which is the largest integer less than or equal to $x$. Written out explicitly, this means that
\begin{displaymath}
\lfloor x\rfloor:=\max_{m\in\mathbb{Z}}\left\{m\leq x\right\}.
\end{displaymath}
We define by $\{x\}$ the \emph{fractional part} of $x$, that is
\begin{displaymath}
\{x\}:=x-\lfloor x\rfloor.
\end{displaymath}
Note that we have $0\leq\{x\}<1$.
\end{definition}

\begin{definition}(Pochhammer symbol)\cite{1}\newline
\noindent We define the \emph{Pochhammer symbol} (or rising factorial function) $(x)_{k}$ by 
\begin{displaymath}
(x)_{k}:=x(x+1)(x+2)(x+3)\cdots(x+k-1)=\frac{\Gamma(x+k)}{\Gamma(x)},
\end{displaymath}
where $\Gamma(x)$ is the gamma function defined by
\begin{displaymath}
\Gamma(x):=\int_{0}^{\infty}e^{-t}t^{x-1}dt.
\end{displaymath}
\end{definition}

\begin{definition}(Stirling Series)\cite{1,2}\newline
\noindent A \emph{Stirling series} (or inverse factorial series) for a function $f:\mathbb{C}\rightarrow\mathbb{C}$, which vanishes at $z\rightarrow+\infty$, is an expansion of the type
\begin{displaymath}
f(x)=\sum_{k=0}^{\infty}\frac{k!a_{k}}{(x)_{k+1}}=\frac{0!a_{0}}{x}+\frac{1!a_{1}}{x(x+1)}+\frac{2!a_{2}}{x(x+1)(x+2)}+\frac{3!a_{3}}{x(x+1)(x+2)(x+3)}+\ldots,
\end{displaymath}
where $(x)_{k}$ denotes the Pochhammer symbol.
\end{definition}

\begin{definition}(Stirling numbers of the first kind)\cite{1}\newline
\noindent Let $k,l\in\mathbb{N}_{0}$ be two non-negative integers such that $k\geq l\geq0$. We define the \emph{Stirling numbers of the first kind} $S_{k}^{(1)}(l)$ as the connecting coefficients in the identity
\begin{displaymath}
(x)_{k}=(-1)^{k}\sum_{l=0}^{k}(-1)^{l}S_{k}^{(1)}(l)x^{l},
\end{displaymath}
where $(x)_{k}$ is the rising factorial.
\end{definition}

\begin{definition}(Bernoulli numbers)\cite{12}\newline
\noindent We define the $k$-th \emph{Bernoulli number} $B_{k}$ as the $k$-th coefficient in the generating function relation
\begin{displaymath}
\frac{x}{e^{x}-1}=\sum_{k=0}^{\infty}\frac{B_{k}}{k!}x^{k}\;\;\forall x\in\mathbb{C}\;\text{with $|x|<2\pi$}.
\end{displaymath}
\end{definition}

\begin{definition}(Euler numbers)\cite{13}\newline
\noindent We define the sequence of \emph{Euler numbers} $\left\{E_{k}\right\}_{k=0}^{\infty}$ by the generating function identity
\begin{displaymath}
\frac{2e^{x}}{e^{2x}+1}=\sum_{k=0}^{\infty}\frac{E_{k}}{k!}x^{k}.
\end{displaymath}
\end{definition}

\begin{definition}(Tangent numbers)\cite{14,15}\newline
\noindent We define the $k$-th \emph{tangent number} $T_{k}$ by the equation
\begin{displaymath}
T_{k}:=\frac{2^{2k}(2^{2k}-1)|B_{2k}|}{2k},
\end{displaymath}
where the $B_{k}$'s are the Bernoulli numbers.
\end{definition}

\begin{definition}(Bernoulli polynomials)\cite[\text{Proposition\ 23.2, p.\ 86}]{16}\newline
\noindent We define the $n$-th \emph{Bernoulli polynomial} $B_{n}(x)$ by
\begin{displaymath}
B_{n}(x):=\sum_{k=0}^{n}{n\choose k}B_{k}x^{n-k},
\end{displaymath}
where the $B_{k}$'s are the Bernoulli numbers.
\end{definition}

\section{The Summation Formulas}
\label{sec:The Summation Formulas}

In this section we will list $32$ summation formulas for various finite sums of analytic functions. 
For this, we need the following 

\begin{lemma}(Euler-Maclaurin summation formula)\cite{13,17}\newline
\noindent Let $f$ be an analytic function. Then we have that
\begin{displaymath}
\begin{split}
\sum_{k=1}^{n}f(k)&=\int_{1}^{n}f(x)dx+\frac{1}{2}\left(f(n)+f(1)\right)+\sum_{k=2}^{m}\frac{B_{k}}{k!}\left(f^{(k-1)}(n)-f^{(k-1)}(1)\right)\\
&\quad+\frac{(-1)^{m+1}}{m!}\int_{1}^{n}B_{m}(\{x\})f^{(m)}(x)dx,
\end{split}
\end{displaymath}
where $B_{m}(x)$ is the $m$-th Bernoulli polynomial and $\{x\}$ denotes the fractional part of $x$.
Therefore, for many functions $f$, we have the asymptotic expansion
\begin{displaymath}
\begin{split}
\sum_{k=1}^{n}f(k)\sim\int_{1}^{n}f(x)dx+C-\frac{B_{1}}{1!}f(n)+\sum_{k=2}^{\infty}\frac{B_{k}}{k!}f^{(k-1)}(n)\;\;\text{as $n\rightarrow\infty$},
\end{split}
\end{displaymath}
for some constant $C\in\mathbb{C}$.
\end{lemma}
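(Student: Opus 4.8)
The plan is to establish the exact identity first by induction on $m$, and then read off the asymptotic expansion. I would begin with the order-one case, working on a single unit interval $[j,j+1]$. On $(j,j+1)$ the periodic Bernoulli function $B_{1}(\{x\})=\{x\}-\frac{1}{2}$ equals $x-j-\frac{1}{2}$ and has derivative $1$, so integration by parts gives $\int_{j}^{j+1}\left(x-j-\frac{1}{2}\right)f'(x)\,dx=\frac{1}{2}\left(f(j)+f(j+1)\right)-\int_{j}^{j+1}f(x)\,dx$. Summing over $j=1,\dots,n-1$ and telescoping the trapezoidal terms (so that $\sum_{j=1}^{n-1}\frac{1}{2}(f(j)+f(j+1))=\sum_{k=1}^{n}f(k)-\frac{1}{2}(f(1)+f(n))$) yields
\begin{displaymath}
\sum_{k=1}^{n}f(k)=\int_{1}^{n}f(x)\,dx+\frac{1}{2}\left(f(1)+f(n)\right)+\int_{1}^{n}B_{1}(\{x\})f'(x)\,dx,
\end{displaymath}
which is precisely the lemma for $m=1$, since there $\frac{(-1)^{m+1}}{m!}=1$ and the sum $\sum_{k=2}^{m}$ is empty.

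Next I would iterate, using the two properties of the Bernoulli polynomials that follow directly from the definition $B_{k}(x)=\sum_{j=0}^{k}\binom{k}{j}B_{j}x^{k-j}$: differentiating the binomial sum termwise gives $B_{k}'(x)=k\,B_{k-1}(x)$, and evaluating at the endpoints gives $B_{k}(0)=B_{k}(1)=B_{k}$ for every $k\geq 2$ (so the periodic extension $B_{k}(\{x\})$ is continuous across the integers and takes the value $B_{k}$ there). Assuming inductively that the remainder after step $m-1$ is $R_{m-1}=\frac{(-1)^{m}}{(m-1)!}\int_{1}^{n}B_{m-1}(\{x\})f^{(m-1)}(x)\,dx$, I rewrite $B_{m-1}(\{x\})=\frac{1}{m}\frac{d}{dx}B_{m}(\{x\})$ and integrate by parts. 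The boundary term collapses to $\frac{(-1)^{m}B_{m}}{m!}\left(f^{(m-1)}(n)-f^{(m-1)}(1)\right)$ and the leftover integral becomes $R_{m}=\frac{(-1)^{m+1}}{m!}\int_{1}^{n}B_{m}(\{x\})f^{(m)}(x)\,dx$, which is exactly the next remainder. The only subtlety is the sign $(-1)^{m}$ on the boundary term: for even $m$ it is $+\frac{B_{m}}{m!}$ as required, while for odd $m\geq 3$ the sign would be wrong but is harmless because $B_{m}=0$ there, so only the even-indexed terms survive and they carry the correct sign. Thus the induction reproduces the stated formula.

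For the asymptotic expansion I would split the exact identity into its $n$-dependent and $n$-independent parts. Everything anchored at the fixed lower endpoint — the quantity $\frac{1}{2}f(1)$, the terms $-\frac{B_{k}}{k!}f^{(k-1)}(1)$, and the stable contribution of $\int_{1}^{n}f$ and of the remainder associated with $x=1$ — is collected into the single constant $C$, leaving the $n$-dependent terms $\int_{1}^{n}f(x)\,dx$, $\frac{1}{2}f(n)=-\frac{B_{1}}{1!}f(n)$ (using $B_{1}=-\frac{1}{2}$), and $\sum_{k\geq 2}\frac{B_{k}}{k!}f^{(k-1)}(n)$. I expect the main obstacle to be exactly the point at which the statement is deliberately hedged by ``for many functions'': one must argue that the remainder $\frac{(-1)^{m+1}}{m!}\int_{1}^{n}B_{m}(\{x\})f^{(m)}(x)\,dx$ is, for each fixed truncation order $m$, of smaller order than the last retained term as $n\to\infty$, so that the finite formula is a genuine asymptotic expansion and $C$ is well defined as the $n$-independent residue. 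I would control this using the uniform boundedness of $B_{m}(\{x\})$ together with decay hypotheses on $f^{(m)}$, but I would not attempt a uniform-in-$m$ bound, since the resulting series in $1/n$ is only asymptotic and typically divergent — which is precisely the divergence the acceleration transformation later in the paper is designed to cure.
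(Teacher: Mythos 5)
The paper does not prove this lemma at all — it is quoted from the literature (references \cite{13} and \cite{17}) — so there is no internal argument to compare yours against. Your proof is the standard one and is correct: the base case by integration by parts of $B_{1}(\{x\})f'(x)$ over unit intervals with telescoping trapezoidal terms, the inductive step via $B_{m}'(x)=mB_{m-1}(x)$ and the continuity of $B_{m}(\{x\})$ at the integers for $m\geq 2$, and the right observation that the residual sign $(-1)^{m}$ on the boundary term is harmless because $B_{m}=0$ for odd $m\geq 3$. Your reading of the asymptotic statement — absorbing the $x=1$ contributions and the limiting remainder into $C$, identifying $\frac{1}{2}f(n)$ with $-\frac{B_{1}}{1!}f(n)$, and refusing to claim convergence of the resulting series — is exactly the level of rigor the deliberately hedged phrase ``for many functions'' calls for.
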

\noindent As well as the next key result found by J. Weniger:

\begin{lemma}(Weniger transformation)\cite{1}\newline
\noindent For every inverse power series $\sum_{k=1}^{\infty}\frac{a_{k}}{x^{k+1}}$, the following transformation formula holds
\begin{displaymath}
\begin{split}
\sum_{k=1}^{\infty}\frac{a_{k}}{x^{k+1}}&=\sum_{k=1}^{\infty}\frac{(-1)^{k}}{(x)_{k+1}}\sum_{l=1}^{k}(-1)^{l}a_{l}S_{k}^{(1)}(l)\\
&=\sum_{k=1}^{\infty}\frac{(-1)^{k}\sum_{l=1}^{k}(-1)^{l}a_{l}S_{k}^{(1)}(l)}{x(x+1)(x+2)\cdots(x+k)}.
\end{split}
\end{displaymath}
\end{lemma}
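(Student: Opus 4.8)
The plan is to establish the identity at the level of formal power series in $1/x$ (equivalently, as an identity of asymptotic expansions as $x\rightarrow\infty$), which is exactly the sense in which the lemma will be applied to the divergent Euler--Maclaurin tails. In this setting the coefficient of each power $x^{-(n+1)}$ on either side is a \emph{finite} sum, so no analytic convergence issue arises and the rearrangements below are automatically legitimate; the genuine analytic equality in any convergent regime then follows from the uniqueness of asymptotic expansions.

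First I would expand the building block $\frac{1}{(x)_{k+1}}$ of the right-hand side back into a power series in $1/x$. Writing $(x)_{k+1}=x^{k+1}\prod_{j=1}^{k}(1+j/x)$ and using the generating function $\sum_{n\ge k}S_n^{(2)}(k)\,t^n=t^k\big/\prod_{j=1}^{k}(1-jt)$ for the Stirling numbers of the second kind $S_n^{(2)}(k)$ (which I would introduce alongside the first-kind numbers already defined), the substitution $t=1/x$ yields
\begin{displaymath}
\frac{1}{(x)_{k+1}}=\sum_{n=k}^{\infty}(-1)^{n-k}S_n^{(2)}(k)\,\frac{1}{x^{n+1}}.
\end{displaymath}
Next I would substitute this expansion into the proposed right-hand side $\sum_{k=1}^{\infty}\frac{c_k}{(x)_{k+1}}$ with $c_k=(-1)^k\sum_{l=1}^{k}(-1)^l a_l S_k^{(1)}(l)$ and collect the coefficient of $x^{-(n+1)}$. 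After the sign simplification $c_k(-1)^{n-k}=(-1)^n\sum_{l=1}^{k}(-1)^l a_l S_k^{(1)}(l)$, this produces the finite sum $(-1)^n\sum_{l=1}^{n}(-1)^l a_l\sum_{k=l}^{n}S_n^{(2)}(k)S_k^{(1)}(l)$. The decisive step is the orthogonality relation $\sum_{k=l}^{n}S_n^{(2)}(k)S_k^{(1)}(l)=\delta_{n,l}$ between the two kinds of Stirling numbers, which collapses the inner sum to a Kronecker delta and leaves precisely $a_n$---matching the coefficient of $x^{-(n+1)}$ on the left-hand side and completing the argument.

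I expect the main obstacle to be not the algebra but setting up the correct inverse expansion: the reciprocal factorial $\frac{1}{(x)_{k+1}}$ must be re-expanded in powers of $1/x$, and this unavoidably brings in the second-kind Stirling numbers together with their orthogonality to the first-kind numbers used in the statement. Since only the first-kind numbers are defined in the text, the cleanest route is to record the second-kind generating function and the orthogonality relation as preliminary facts (with a reference), after which the coefficient comparison is routine. A secondary point to handle with care is the interchange of the double summation over $k$ and $n$; framing the identity as one of formal, respectively asymptotic, power series makes each coefficient a finite sum and renders this interchange harmless.
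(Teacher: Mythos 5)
The paper contains no proof of this lemma at all: it is imported verbatim from Weniger's article \cite{1} and then simply applied, so there is no internal argument to compare yours against. Your formal-power-series derivation is correct and fills that gap. The expansion $\frac{1}{(x)_{k+1}}=\sum_{n\geq k}(-1)^{n-k}S_{n}^{(2)}(k)\,x^{-n-1}$ does follow from the second-kind generating function at $t=-1/x$ (check: $k=1$ gives $x^{-2}-x^{-3}+\ldots$, $k=2$ gives $x^{-3}-3x^{-4}+7x^{-5}-\ldots$), the interchange of the $k$ and $n$ sums is harmless because each coefficient is a finite sum, and with the paper's convention $(x)_{k}=(-1)^{k}\sum_{l}(-1)^{l}S_{k}^{(1)}(l)x^{l}$ the numbers $S_{k}^{(1)}(l)$ are exactly the signed first-kind Stirling numbers, so the orthogonality relation $\sum_{k=l}^{n}S_{n}^{(2)}(k)S_{k}^{(1)}(l)=\delta_{n,l}$ applies in the form you use and collapses the coefficient of $x^{-n-1}$ to $a_{n}$. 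For comparison, Weniger's own derivation runs in the transposed direction: it starts from the classical factorial-series expansion $x^{-l-1}=\sum_{k\geq l}(-1)^{k-l}S_{k}^{(1)}(l)/(x)_{k+1}$ (Nielsen, N\"orlund), multiplies by $a_{l}$ and interchanges the $l$ and $k$ sums; that route never needs the second-kind numbers or the orthogonality relation, at the cost of having to quote the factorial-series expansion of $x^{-l-1}$ as the preliminary fact instead. Both are one-page coefficient manipulations and both are legitimate.

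One correction to your closing remark: genuine analytic equality does \emph{not} follow from ``uniqueness of asymptotic expansions.'' Two functions can share an asymptotic expansion and differ (by $e^{-x}$, say), and a convergent factorial series represents a function only in a half-plane $\mathrm{Re}(x)>\lambda$, not in a full neighbourhood of $\infty$ on the sphere, so no identity theorem at infinity is available. Upgrading the formal identity to an equality of functions in a common region of convergence requires the actual convergence theory of inverse factorial series (N\"orlund's results, which is what \cite{1} supplies). This does not damage your proof of the lemma as an identity of formal series in $1/x$ --- which is precisely the sense in which the paper uses it, since it is applied to divergent Euler--Maclaurin tails --- but the justification for why the resulting convergent Stirling series equals the original finite sum is a separate analytic step that neither your argument nor the paper addresses.
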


Now it is time for our summation formulas. For every single finite sum there are infinitely many different such formulas, therefore we give here just the canonical ones. It is not hard to prove, using the asymptotic expansions of the Bernoulli numbers and the Stirling numbers of the first kind as well as that of the Euler numbers, that all series below converges very rapidly.

\begin{theorem}(Summation formulas for the harmonic series)\newline
\noindent For every natural number $n\in\mathbb{N}$, we have that
\begin{displaymath}
\begin{split}
\sum_{k=1}^{n}\frac{1}{k}&=\log(n)+\gamma+\frac{1}{2n}+\sum_{k=1}^{\infty}\frac{(-1)^{k+1}\sum_{l=1}^{k}\frac{(-1)^{l}}{l+1}B_{l+1}S_{k}^{(1)}(l)}{n(n+1)(n+2)\cdots(n+k)}\\
&=\log(n)+\gamma+\frac{1}{2n}-\frac{1}{12n(n+1)}-\frac{1}{12n(n+1)(n+2)}-\frac{19}{120n(n+1)(n+2)(n+3)}\\
&\quad-\frac{9}{20n(n+1)(n+2)(n+3)(n+4)}-\ldots.
\end{split}
\end{displaymath}
\noindent We also have that
\begin{displaymath}
\begin{split}
\sum_{k=1}^{n}\frac{1}{k}&=\log(n)+\gamma+\sum_{k=1}^{\infty}\frac{(-1)^{k+1}\sum_{l=1}^{k}\frac{(-1)^{l}}{l}B_{l}S_{k}^{(1)}(l)}{(n+1)(n+2)\cdots(n+k)}\\
&=\log(n)+\gamma+\frac{1}{2(n+1)}+\frac{5}{12(n+1)(n+2)}+\frac{3}{4(n+1)(n+2)(n+3)}\\
&\quad+\frac{251}{120(n+1)(n+2)(n+3)(n+4)}+\ldots.
\end{split}
\end{displaymath}
\end{theorem}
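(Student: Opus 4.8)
The plan is to derive both identities by the paper's general recipe: feed the Euler--Maclaurin asymptotic tail for $f(x)=1/x$ into the Weniger transformation. First I would apply the Euler--Maclaurin lemma to $f(x)=1/x$. Since $\int_1^n x^{-1}\,dx=\log n$ and $f^{(k-1)}(x)=(-1)^{k-1}(k-1)!\,x^{-k}$, the $n$-dependent part of the expansion collapses to the single clean series $T(n):=-\sum_{l=1}^{\infty}\frac{B_l}{l}\,n^{-l}$ (using $B_1=-\tfrac12$, so the leading term is $\tfrac{1}{2n}$ with the Euler--Maclaurin boundary sign), giving $\sum_{k=1}^n \frac1k \sim \log n + C + T(n)$. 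The additive constant $C$ is fixed by letting $n\to\infty$: every term of $T(n)$ tends to $0$, so $C=\lim_{n\to\infty}\big(\sum_{k=1}^n\frac1k-\log n\big)=\gamma$ by the classical definition of the Euler--Mascheroni constant.

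Next I would feed $T(n)$ into the Weniger transformation lemma in two slightly different ways, one for each displayed formula. For the first formula I split off the leading term $\tfrac{1}{2n}$ and write the remainder $T(n)-\tfrac{1}{2n}=\sum_{l\ge1}\frac{-B_{l+1}}{l+1}\,n^{-(l+1)}$, which is exactly of the shape $\sum_{l\ge1}a_l x^{-(l+1)}$ with $a_l=-B_{l+1}/(l+1)$ and $x=n$; applying the lemma yields the factorial series with denominators $n(n+1)\cdots(n+k)=(n)_{k+1}$. For the second formula I instead factor one power of $n$ out of the entire tail, $T(n)=n\cdot\sum_{l\ge1}\frac{-B_l}{l}\,n^{-(l+1)}$, apply the lemma to the bracketed series (now $a_l=-B_l/l$), and then restore the factor $n$ using the elementary identity $\frac{n}{(n)_{k+1}}=\frac{1}{(n+1)(n+2)\cdots(n+k)}$, which converts the denominators into $(n+1)_k$.

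In both cases the sign matching is a direct algebraic identity rather than a real computation: with $a_l=-B_{l+1}/(l+1)$ (first formula) one has $(-1)^k(-1)^l a_l=(-1)^{k+1}(-1)^l\frac{B_{l+1}}{l+1}$, which is precisely the printed inner coefficient times the printed outer sign $(-1)^{k+1}$, and identically for $a_l=-B_l/l$ in the second formula. Here the vanishing $B_l=0$ for odd $l\ge3$ plays only the secondary role of explaining why the inner sums are effectively supported on $l=1$ and even $l$, matching the absence of odd inverse powers in $T(n)$. I would nonetheless confirm the bookkeeping on the first two coefficients ($k=1,2$), where the printed values $-\tfrac{1}{12n(n+1)}$ and $\tfrac{5}{12(n+1)(n+2)}$ pin down every sign unambiguously.

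The genuine obstacle is conceptual rather than computational: the power series $T(n)$ is the \emph{divergent} Euler--Maclaurin asymptotic series, so the Weniger lemma cannot be invoked as a literal identity between two convergent series. What must be justified is that the factorial series produced on the right, which does converge rapidly in a right half-plane, sums to the exact function $\sum_{k=1}^n\frac1k-\log n-\gamma=\psi(n+1)-\log n$ and not merely to something sharing its asymptotics. I would settle this by appealing to the convergence and uniqueness theory of inverse factorial (Stirling) series: a direct estimate using the known growth of $B_l$ and of the Stirling numbers $S_k^{(1)}(l)$ shows the transformed series converges for $\mathrm{Re}(n)$ large, and the theory identifies it with the unique analytic function carrying the given Poincar\'e expansion, namely $\psi(n+1)-\log n$. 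Equality for all integers $n\ge1$ then follows by analytic continuation. This convergence-and-identification step is where essentially all the real content lies.
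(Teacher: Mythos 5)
Your derivation is the paper's own proof, step for step: apply Euler--Maclaurin to $f(x)=1/x$ to obtain $\sum_{k=1}^{n}\frac{1}{k}\sim\log(n)+\gamma-\sum_{k\geq1}\frac{B_{k}}{kn^{k}}$, then get the first formula by splitting off the $\frac{1}{2n}$ term and Weniger-transforming $-\sum_{k\geq1}\frac{B_{k+1}}{(k+1)n^{k+1}}$, and the second by writing the tail as $-n\sum_{k\geq1}\frac{B_{k}}{kn^{k+1}}$, transforming the inner series, and absorbing the prefactor $n$ into the Pochhammer denominator. The decompositions, the choice of $a_{l}$, and the sign bookkeeping all coincide with the paper.

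The one place you go beyond the paper is in flagging that the Weniger lemma is here applied to a \emph{divergent} asymptotic series, so the convergent factorial series it produces must still be identified with the exact function $\psi(n+1)-\log n$. That concern is legitimate, and the paper simply does not address it --- its proof stops at the formal manipulation. But your proposed resolution is not sound as stated: an analytic function is \emph{not} determined by its Poincar\'e expansion (adding $e^{-n}$ changes nothing in the expansion), so ``the unique analytic function carrying the given asymptotic expansion'' is not a well-defined object, and no amount of estimating $B_{l}$ and $S_{k}^{(1)}(l)$ to prove convergence of the factorial series will by itself tell you \emph{what} it converges to. The standard rigorous route is different: start from an exact integral representation, e.g., Binet's formula $\sum_{k=1}^{n}\frac{1}{k}=\log n+\gamma+\frac{1}{2n}-\int_{0}^{\infty}\bigl(\frac{1}{2}-\frac{1}{t}+\frac{1}{e^{t}-1}\bigr)e^{-nt}\,dt$, substitute $u=1-e^{-t}$, expand the integrand in powers of $u$, and integrate term by term using $\int_{0}^{1}u^{k}(1-u)^{n-1}\,du=\frac{k!}{n(n+1)\cdots(n+k)}$; this is exactly the N\"orlund--Weniger justification of the transformation and yields the identity as an equality of convergent expressions. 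If you replace your uniqueness appeal by this integral-representation argument, your proof is complete and in fact more rigorous than the paper's.
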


\begin{proof}
Applying the Euler-Maclaurin summation formula to the function $f(x):=\frac{1}{x}$, we get that
\begin{displaymath}
\sum_{k=1}^{n}\frac{1}{k}\sim\log(n)+\gamma-\sum_{k=1}^{\infty}\frac{B_{k}}{kn^{k}}.
\end{displaymath}
Applying now the Weniger transformation to the equivalent series
\begin{displaymath}
\begin{split}
\sum_{k=1}^{n}\frac{1}{k}&\sim\log(n)+\gamma+\frac{1}{2n}-\sum_{k=2}^{\infty}\frac{B_{k}}{kn^{k}}\\
&\sim\log(n)+\gamma+\frac{1}{2n}-\sum_{k=1}^{\infty}\frac{B_{k+1}}{(k+1)n^{k+1}},
\end{split}
\end{displaymath}
we get the first claimed formula.
To obtain the second expression, we apply the Weniger transformation 
to the identity
\begin{displaymath}
\sum_{k=1}^{n}\frac{1}{k}\sim\log(n)+\gamma-n\sum_{k=1}^{\infty}\frac{B_{k}}{kn^{k+1}}.
\end{displaymath}
\end{proof}

\noindent All summation formulas in this article, which we list below, are obtained using the same universal technique: For each finite sum, we used first the Euler-Maclaurin summation formula and then applied to it the Weniger transformation to get convergent formulas. Therefore, we omit full proofs and list here just our final results.

\begin{itemize}
\item[1.)]{{\bf Summation formulas for the harmonic series:}\newline
\noindent For every natural number $n\in\mathbb{N}$, we have
\begin{displaymath}
\begin{split}
\sum_{k=1}^{n}\frac{1}{k}&=\log(n)+\gamma+\frac{1}{2n}+\sum_{k=1}^{\infty}\frac{(-1)^{k+1}\sum_{l=1}^{k}\frac{(-1)^{l}}{l+1}B_{l+1}S_{k}^{(1)}(l)}{n(n+1)(n+2)\cdots(n+k)}\\
&=\log(n)+\gamma+\frac{1}{2n}-\frac{1}{12n(n+1)}-\frac{1}{12n(n+1)(n+2)}-\frac{19}{120n(n+1)(n+2)(n+3)}\\
&\quad-\frac{9}{20n(n+1)(n+2)(n+3)(n+4)}-\ldots
\end{split}
\end{displaymath}
\noindent and
\begin{displaymath}
\begin{split}
\sum_{k=1}^{n}\frac{1}{k}&=\log(n)+\gamma+\sum_{k=1}^{\infty}\frac{(-1)^{k+1}\sum_{l=1}^{k}\frac{(-1)^{l}}{l}B_{l}S_{k}^{(1)}(l)}{(n+1)(n+2)\cdots(n+k)}\\
&=\log(n)+\gamma+\frac{1}{2(n+1)}+\frac{5}{12(n+1)(n+2)}+\frac{3}{4(n+1)(n+2)(n+3)}\\
&\quad+\frac{251}{120(n+1)(n+2)(n+3)(n+4)}+\ldots.
\end{split}
\end{displaymath}
}

\item[2.)]{{\bf Summation formulas for the $n$-th partial sum of $\zeta(2)$:}\newline
\noindent For every natural number $n\in\mathbb{N}$, we have
\begin{displaymath}
\begin{split}
\sum_{k=1}^{n}\frac{1}{k^{2}}&=\zeta(2)-\frac{1}{n}+\frac{1}{2n^{2}}+\frac{1}{n}\sum_{k=1}^{\infty}\frac{(-1)^{k+1}\sum_{l=1}^{k}(-1)^{l}B_{l+1}S_{k}^{(1)}(l)}{n(n+1)(n+2)\cdots(n+k)}\\
&=\zeta(2)-\frac{1}{n}+\frac{1}{2n^{2}}-\frac{1}{6n^{2}(n+1)}-\frac{1}{6n^{2}(n+1)(n+2)}-\frac{3}{10n^{2}(n+1)(n+2)(n+3)}\\
&\quad-\frac{4}{5n^{2}(n+1)(n+2)(n+3)(n+4)}-\ldots
\end{split}
\end{displaymath}
and
\begin{displaymath}
\begin{split}
\sum_{k=1}^{n}\frac{1}{k^{2}}&=\zeta(2)-\frac{1}{n}+\sum_{k=1}^{\infty}\frac{(-1)^{k+1}\sum_{l=1}^{k}(-1)^{l}B_{l}S_{k}^{(1)}(l)}{n(n+1)(n+2)\cdots(n+k)}\\
&=\zeta(2)-\frac{1}{n}+\sum_{k=1}^{\infty}\frac{1}{k+1}\cdot\frac{(k-1)!}{n(n+1)(n+2)\cdots(n+k)}\\
&=\zeta(2)-\frac{1}{n}+\frac{1}{2n(n+1)}+\frac{1}{3n(n+1)(n+2)}+\frac{1}{2n(n+1)(n+2)(n+3)}\\
&\quad+\frac{6}{5n(n+1)(n+2)(n+3)(n+4)}+\ldots.
\end{split}
\end{displaymath}}

\item[3.)]{{\bf Summation formulas for the $n$-th partial sum of $\zeta(3)$:}\newline
\noindent For every natural number $n\in\mathbb{N}$, we have\begin{displaymath}
\begin{split}
\sum_{k=1}^{n}\frac{1}{k^{3}}&=\zeta(3)-\frac{1}{2n^{2}}+\frac{1}{2n^{3}}+\frac{1}{2n^{2}}\sum_{k=1}^{\infty}\frac{(-1)^{k+1}\sum_{l=1}^{k}(-1)^{l}(l+2)B_{l+1}S_{k}^{(1)}(l)}{n(n+1)(n+2)\cdots(n+k)}\\
&=\zeta(3)-\frac{1}{2n^{2}}+\frac{1}{2n^{3}}-\frac{1}{4n^{3}(n+1)}-\frac{1}{4n^{3}(n+1)(n+2)}-\frac{5}{12n^{3}(n+1)(n+2)(n+3)}\\
&\quad-\frac{1}{n^{3}(n+1)(n+2)(n+3)(n+4)}-\ldots
\end{split}
\end{displaymath}
and
\begin{displaymath}
\begin{split}
\sum_{k=1}^{n}\frac{1}{k^{3}}&=\zeta(3)-\frac{1}{2n^{2}}+\frac{1}{2}\sum_{k=1}^{\infty}\frac{(-1)^{k+1}\sum_{l=1}^{k}(-1)^{l}(l+1)B_{l}S_{k}^{(1)}(l)}{n^{2}(n+1)(n+2)\cdots(n+k)}\\
&=\zeta(3)-\frac{1}{2n^{2}}+\frac{1}{2n^{2}(n+1)}+\frac{1}{4n^{2}(n+1)(n+2)}+\frac{1}{4n^{2}(n+1)(n+2)(n+3)}\\
&\quad+\frac{1}{3n^{2}(n+1)(n+2)(n+3)(n+4)}+\ldots.
\end{split}
\end{displaymath}}

\item[4.)]{{\bf Summation formulas for the sum of the square roots of the first $n$ natural numbers:}\newline
\noindent For every natural number $n\in\mathbb{N}$, we have\begin{displaymath}
\begin{split}
\sum_{k=0}^{n}\sqrt{k}&=\frac{2}{3}n^{\frac{3}{2}}+\frac{1}{2}\sqrt{n}-\frac{1}{4\pi}\zeta\left(\frac{3}{2}\right)+\sqrt{n}\sum_{k=1}^{\infty}(-1)^{k}\frac{\sum_{l=1}^{k}(-1)^{l}\frac{(2l-3)!!}{2^{l}(l+1)!}B_{l+1}S_{k}^{(1)}(l)}{(n+1)(n+2)\cdots(n+k)}\\
&=\frac{2}{3}n^{\frac{3}{2}}+\frac{1}{2}\sqrt{n}-\frac{1}{4\pi}\zeta\left(\frac{3}{2}\right)+\frac{\sqrt{n}}{24(n+1)}+\frac{\sqrt{n}}{24(n+1)(n+2)}+\frac{53\sqrt{n}}{640(n+1)(n+2)(n+3)}\\
&\quad+\frac{79\sqrt{n}}{320(n+1)(n+2)(n+3)(n+4)}+\ldots,
\end{split}
\end{displaymath}
as well as
\begin{displaymath}
\begin{split}
\sum_{k=0}^{n}\sqrt{k}&=\frac{2}{3}n^{\frac{3}{2}}+\frac{1}{2}\sqrt{n}-\frac{1}{4\pi}\zeta\left(\frac{3}{2}\right)+\frac{1}{24\sqrt{n}}+\sum_{k=1}^{\infty}(-1)^{k}\frac{\sum_{l=1}^{k}(-1)^{l}\frac{(2l-1)!!}{2^{l+1}(l+2)!}B_{l+2}S_{k}^{(1)}(l)}{\sqrt{n}(n+1)(n+2)\cdots(n+k)}\\
&=\frac{2}{3}n^{\frac{3}{2}}+\frac{1}{2}\sqrt{n}-\frac{1}{4\pi}\zeta\left(\frac{3}{2}\right)+\frac{1}{24\sqrt{n}}-\frac{1}{1920\sqrt{n}(n+1)(n+2)}\\
&\quad-\frac{1}{640\sqrt{n}(n+1)(n+2)(n+3)}-\frac{259}{46080\sqrt{n}(n+1)(n+2)(n+3)(n+4)}\\
&\quad-\frac{115}{4608\sqrt{n}(n+1)(n+2)(n+3)(n+4)(n+5)}-\ldots
\end{split}
\end{displaymath}
and
\begin{displaymath}
\begin{split}
\sum_{k=0}^{n}\sqrt{k}&=\frac{2}{3}n^{\frac{3}{2}}-\frac{1}{4\pi}\zeta\left(\frac{3}{2}\right)+n\sqrt{n}\sum_{k=1}^{\infty}(-1)^{k}\frac{\sum_{l=1}^{k}(-1)^{l}\frac{(2l-5)!!}{2^{l-1}l!}B_{l}S_{k}^{(1)}(l)}{(n+1)(n+2)\cdots(n+k)}\\
&=\frac{2}{3}n^{\frac{3}{2}}-\frac{1}{4\pi}\zeta\left(\frac{3}{2}\right)+\frac{n\sqrt{n}}{2(n+1)}+\frac{13n\sqrt{n}}{24(n+1)(n+2)}+\frac{9n\sqrt{n}}{8(n+1)(n+2)(n+3)}\\
&\quad+\frac{2213n\sqrt{n}}{640(n+1)(n+2)(n+3)(n+4)}+\ldots.
\end{split}
\end{displaymath}
}

\item[5.)]{{\bf Summation formulas for the $n$-th partial sum of $\zeta(-3/2)$:}\newline
\noindent For every natural number $n\in\mathbb{N}$, we have
\begin{displaymath}
\begin{split}
\sum_{k=0}^{n}k\sqrt{k}&=\frac{2}{5}n^{\frac{5}{2}}+\frac{1}{2}n^{\frac{3}{2}}-\frac{3}{16\pi^{2}}\zeta\left(\frac{5}{2}\right)+\frac{3}{2}n^{\frac{3}{2}}\sum_{k=1}^{\infty}(-1)^{k+1}\frac{\sum_{l=1}^{k}(-1)^{l}\frac{(2l-5)!!}{2^{l-1}(l+1)!}B_{l+1}S_{k}^{(1)}(l)}{(n+1)(n+2)\cdots(n+k)}\\
&=\frac{2}{5}n^{\frac{5}{2}}+\frac{1}{2}n^{\frac{3}{2}}-\frac{3}{16\pi^{2}}\zeta\left(\frac{5}{2}\right)+\frac{n\sqrt{n}}{8(n+1)}+\frac{n\sqrt{n}}{8(n+1)(n+2)}+\frac{481n\sqrt{n}}{1920(n+1)(n+2)(n+3)}\\
&\quad+\frac{241n\sqrt{n}}{320(n+1)(n+2)(n+3)(n+4)}+\ldots,
\end{split}
\end{displaymath}
as well as
\begin{displaymath}
\begin{split}
\sum_{k=0}^{n}k\sqrt{k}&=\frac{2}{5}n^{\frac{5}{2}}+\frac{1}{2}n^{\frac{3}{2}}+\frac{1}{8}\sqrt{n}-\frac{3}{16\pi^{2}}\zeta\left(\frac{5}{2}\right)+\frac{3}{2}\sum_{k=1}^{\infty}(-1)^{k+1}\frac{\sum_{l=1}^{k}(-1)^{l}\frac{(2l-1)!!}{2^{l+1}(l+3)!}B_{l+3}S_{k}^{(1)}(l)}{\sqrt{n}(n+1)(n+2)\cdots(n+k)}\\
&=\frac{2}{5}n^{\frac{5}{2}}+\frac{1}{2}n^{\frac{3}{2}}+\frac{1}{8}\sqrt{n}-\frac{3}{16\pi^{2}}\zeta\left(\frac{5}{2}\right)+\frac{1}{1920\sqrt{n}(n+1)}+\frac{1}{1920\sqrt{n}(n+1)(n+2)}\\
&\quad+\frac{107}{107520\sqrt{n}(n+1)(n+2)(n+3)}+\frac{51}{17920\sqrt{n}(n+1)(n+2)(n+3)(n+4)}+\ldots
\end{split}
\end{displaymath}
and
\begin{displaymath}
\begin{split}
\sum_{k=0}^{n}k\sqrt{k}&=\frac{2}{5}n^{\frac{5}{2}}-\frac{3}{16\pi^{2}}\zeta\left(\frac{5}{2}\right)+\frac{3}{2}n^{\frac{5}{2}}\sum_{k=1}^{\infty}(-1)^{k+1}\frac{\sum_{l=1}^{k}(-1)^{l}\frac{(2l-7)!!}{2^{l-2}l!}B_{l}S_{k}^{(1)}(l)}{(n+1)(n+2)\cdots(n+k)}\\
&=\frac{2}{5}n^{\frac{5}{2}}-\frac{3}{16\pi^{2}}\zeta\left(\frac{5}{2}\right)+\frac{n^{2}\sqrt{n}}{2(n+1)}+\frac{5n^{2}\sqrt{n}}{8(n+1)(n+2)}+\frac{11n^{2}\sqrt{n}}{8(n+1)(n+2)(n+3)}\\
&\quad+\frac{8401n^{2}\sqrt{n}}{1920(n+1)(n+2)(n+3)(n+4)}+\ldots.
\end{split}
\end{displaymath}}

\item[6.)]{{\bf Summation formulas for the $n$-th partial sum of $\zeta(-5/2)$:}\newline
\noindent For every natural number $n\in\mathbb{N}$, we have
\begin{displaymath}
\begin{split}
\sum_{k=0}^{n}k^{2}\sqrt{k}&=\frac{2}{7}n^{\frac{7}{2}}+\frac{1}{2}n^{\frac{5}{2}}+\frac{15}{64\pi^{3}}\zeta\left(\frac{7}{2}\right)+\frac{15}{4}n^{\frac{5}{2}}\sum_{k=1}^{\infty}(-1)^{k}\frac{\sum_{l=1}^{k}(-1)^{l}\frac{(2l-7)!!}{2^{l-2}(l+1)!}B_{l+1}S_{k}^{(1)}(l)}{(n+1)(n+2)\cdots(n+k)}\\
&=\frac{2}{7}n^{\frac{7}{2}}+\frac{1}{2}n^{\frac{5}{2}}+\frac{15}{64\pi^{3}}\zeta\left(\frac{7}{2}\right)+\frac{5n^{2}\sqrt{n}}{24(n+1)}+\frac{5n^{2}\sqrt{n}}{24(n+1)(n+2)}\\
&\quad+\frac{53n^{2}\sqrt{n}}{128(n+1)(n+2)(n+3)}+\frac{79n^{2}\sqrt{n}}{64(n+1)(n+2)(n+3)(n+4)}\\
&\quad+\frac{35187n^{2}\sqrt{n}}{7168(n+1)(n+2)(n+3)(n+4)(n+5)}+\ldots,
\end{split}
\end{displaymath}
as well as
\begin{displaymath}
\begin{split}
\sum_{k=0}^{n}k^{2}\sqrt{k}&=\frac{2}{7}n^{\frac{7}{2}}+\frac{1}{2}n^{\frac{5}{2}}+\frac{5}{24}n^{\frac{3}{2}}+\frac{15}{64\pi^{3}}\zeta\left(\frac{7}{2}\right)-\frac{1}{384\sqrt{n}}\\
&\quad+\frac{15}{4}\sum_{k=1}^{\infty}(-1)^{k}\frac{\sum_{l=1}^{k}(-1)^{l}\frac{(2l-1)!!}{2^{l+1}(l+4)!}B_{l+4}S_{k}^{(1)}(l)}{\sqrt{n}(n+1)(n+2)\cdots(n+k)}\\
&=\frac{2}{7}n^{\frac{7}{2}}+\frac{1}{2}n^{\frac{5}{2}}+\frac{5}{24}n^{\frac{3}{2}}+\frac{15}{64\pi^{3}}\zeta\left(\frac{7}{2}\right)-\frac{1}{384\sqrt{n}}+\frac{1}{21504\sqrt{n}(n+1)(n+2)}\\
&\quad+\frac{1}{7168\sqrt{n}(n+1)(n+2)(n+3)}+\frac{115}{229376\sqrt{n}(n+1)(n+2)(n+3)(n+4)}\\
&\quad+\frac{255}{114688\sqrt{n}(n+1)(n+2)(n+3)(n+4)(n+5)}+\ldots
\end{split}
\end{displaymath}
and
\begin{displaymath}
\begin{split}
\sum_{k=0}^{n}k^{2}\sqrt{k}&=\frac{2}{7}n^{\frac{7}{2}}+\frac{15}{64\pi^{3}}\zeta\left(\frac{7}{2}\right)+\frac{15}{4}n^{\frac{7}{2}}\sum_{k=1}^{\infty}(-1)^{k}\frac{\sum_{l=1}^{k}(-1)^{l}\frac{(2l-9)!!}{2^{l-3}l!}B_{l}S_{k}^{(1)}(l)}{(n+1)(n+2)\cdots(n+k)}\\
&=\frac{2}{7}n^{\frac{7}{2}}+\frac{15}{64\pi^{3}}\zeta\left(\frac{7}{2}\right)+\frac{n^{3}\sqrt{n}}{2(n+1)}+\frac{17n^{3}\sqrt{n}}{24(n+1)(n+2)}+\frac{13n^{3}\sqrt{n}}{8(n+1)(n+2)(n+3)}\\
&\quad+\frac{677n^{3}\sqrt{n}}{128(n+1)(n+2)(n+3)(n+4)}+\ldots.
\end{split}
\end{displaymath}}

\item[7.)]{{\bf Summation formulas for the sum of the inverses of the square roots of the first $n$ natural numbers:}\newline
\noindent For every natural number $n\in\mathbb{N}$, we have
\begin{displaymath}
\begin{split}
\sum_{k=1}^{n}\frac{1}{\sqrt{k}}
&=2\sqrt{n}+\zeta\left(\frac{1}{2}\right)+\frac{1}{2\sqrt{n}}+\sum_{k=1}^{\infty}(-1)^{k+1}\frac{\sum_{l=1}^{k}(-1)^{l}\frac{(2l-1)!!}{2^{l}(l+1)!}B_{l+1}S_{k}^{(1)}(l)}{\sqrt{n}(n+1)(n+2)\cdots(n+k)}\\
&=2\sqrt{n}+\zeta\left(\frac{1}{2}\right)+\frac{1}{2\sqrt{n}}-\frac{1}{24\sqrt{n}(n+1)}-\frac{1}{24\sqrt{n}(n+1)(n+2)}\\
&\quad-\frac{31}{384\sqrt{n}(n+1)(n+2)(n+3)}-\frac{15}{64\sqrt{n}(n+1)(n+2)(n+3)(n+4)}-\ldots
\end{split}
\end{displaymath}
and
\begin{displaymath}
\begin{split}
\sum_{k=1}^{n}\frac{1}{\sqrt{k}}
&=2\sqrt{n}+\zeta\left(\frac{1}{2}\right)+\sqrt{n}\sum_{k=1}^{\infty}(-1)^{k+1}\frac{\sum_{l=1}^{k}(-1)^{l}\frac{(2l-3)!!}{2^{l-1}l!}B_{l}S_{k}^{(1)}(l)}{(n+1)(n+2)\cdots(n+k)}\\
&=2\sqrt{n}+\zeta\left(\frac{1}{2}\right)+\frac{\sqrt{n}}{2(n+1)}+\frac{11\sqrt{n}}{24(n+1)(n+2)}+\frac{7\sqrt{n}}{8(n+1)(n+2)(n+3)}\\
&\quad+\frac{977\sqrt{n}}{384(n+1)(n+2)(n+3)(n+4)}+\ldots.
\end{split}
\end{displaymath}
}

\item[8.)]{{\bf Summation formulas for the $n$-th partial sum of $\zeta(3/2)$:}\newline
\noindent For every natural number $n\in\mathbb{N}$, we have
\begin{displaymath}
\begin{split}
\sum_{k=1}^{n}\frac{1}{k\sqrt{k}}
&=\zeta\left(\frac{3}{2}\right)-\frac{2}{\sqrt{n}}+\frac{1}{2n\sqrt{n}}+\frac{2}{\sqrt{n}}\sum_{k=1}^{\infty}\frac{(-1)^{k+1}\sum_{l=1}^{k}(-1)^{l}\frac{(2l+1)!!}{2^{l+1}(l+1)!}B_{l+1}S_{k}^{(1)}(l)}{n(n+1)(n+2)\cdots(n+k)}\\
&=\zeta\left(\frac{3}{2}\right)-\frac{2}{\sqrt{n}}+\frac{1}{2n\sqrt{n}}-\frac{1}{8n\sqrt{n}(n+1)}-\frac{1}{8n\sqrt{n}(n+1)(n+2)}\\
&\quad-\frac{89}{384n\sqrt{n}(n+1)(n+2)(n+3)}-\frac{41}{64n\sqrt{n}(n+1)(n+2)(n+3)(n+4)}-\ldots
\end{split}
\end{displaymath}
and
\begin{displaymath}
\begin{split}
\sum_{k=1}^{n}\frac{1}{k\sqrt{k}}
&=\zeta\left(\frac{3}{2}\right)-\frac{2}{\sqrt{n}}+2\sum_{k=1}^{\infty}\frac{(-1)^{k+1}\sum_{l=1}^{k}(-1)^{l}\frac{(2l-1)!!}{2^{l}l!}B_{l}S_{k}^{(1)}(l)}{\sqrt{n}(n+1)(n+2)\cdots(n+k)}\\
&=\zeta\left(\frac{3}{2}\right)-\frac{2}{\sqrt{n}}+\frac{1}{2\sqrt{n}(n+1)}+\frac{3}{8\sqrt{n}(n+1)(n+2)}+\frac{5}{8\sqrt{n}(n+1)(n+2)(n+3)}\\
&\quad+\frac{631}{384\sqrt{n}(n+1)(n+2)(n+3)(n+4)}-\ldots.
\end{split}
\end{displaymath}}

\item[9.)]{{\bf Summation formulas for the $n$-th partial sum of $\zeta(5/2)$:}\newline
\noindent For every natural number $n\in\mathbb{N}$, we have that
\begin{displaymath}
\begin{split}
\sum_{k=1}^{n}\frac{1}{k^{2}\sqrt{k}}&=\zeta\left(\frac{5}{2}\right)-\frac{2}{3n^{\frac{3}{2}}}+\frac{1}{2n^{2}\sqrt{n}}+\frac{4}{3n\sqrt{n}}\sum_{k=1}^{\infty}\frac{(-1)^{k+1}\sum_{l=1}^{k}(-1)^{l}\frac{(2l+3)!!}{2^{l+2}(l+1)!}B_{l+1}S_{k}^{(1)}(l)}{n(n+1)(n+2)\cdots(n+k)}\\
&=\zeta\left(\frac{5}{2}\right)-\frac{2}{3n^{\frac{3}{2}}}+\frac{1}{2n^{2}\sqrt{n}}-\frac{5}{24n^{2}\sqrt{n}(n+1)}-\frac{5}{24n^{2}\sqrt{n}(n+1)(n+2)}\\
&\quad-\frac{139}{384n^{2}\sqrt{n}(n+1)(n+2)(n+3)}-\frac{59}{64n^{2}\sqrt{n}(n+1)(n+2)(n+3)(n+4)}-\ldots
\end{split}
\end{displaymath}
and that
\begin{displaymath}
\begin{split}
\sum_{k=1}^{n}\frac{1}{k^{2}\sqrt{k}}&=\zeta\left(\frac{5}{2}\right)-\frac{2}{3n^{\frac{3}{2}}}+\frac{4}{3\sqrt{n}}\sum_{k=1}^{\infty}\frac{(-1)^{k+1}\sum_{l=1}^{k}(-1)^{l}\frac{(2l+1)!!}{2^{l+1}l!}B_{l}S_{k}^{(1)}(l)}{n(n+1)(n+2)\cdots(n+k)}\\
&=\zeta\left(\frac{5}{2}\right)-\frac{2}{3n^{\frac{3}{2}}}+\frac{1}{2n\sqrt{n}(n+1)}+\frac{7}{24n\sqrt{n}(n+1)(n+2)}+\frac{3}{8n\sqrt{n}(n+1)(n+2)(n+3)}\\
&\quad+\frac{293}{384n\sqrt{n}(n+1)(n+2)(n+3)(n+4)}+\ldots.
\end{split}
\end{displaymath}}

\item[10.)]{{\bf Convergent versions of Stirling's formula:}\newline
\noindent For every natural number $n\in\mathbb{N}$, we have that
\begin{displaymath}
\begin{split}
\sum_{k=1}^{n}\log(k)&=n\log(n)-n+\frac{1}{2}\log\left(2\pi\right)+\frac{1}{2}\log(n)+\sum_{k=1}^{\infty}(-1)^{k}\frac{\sum_{l=1}^{k}\frac{(-1)^{l}}{l(l+1)}B_{l+1}S_{k}^{(1)}(l)}{(n+1)(n+2)\cdots(n+k)}\\
&=n\log(n)-n+\frac{1}{2}\log\left(2\pi\right)+\frac{1}{2}\log(n)+\frac{1}{12(n+1)}+\frac{1}{12(n+1)(n+2)}\\
&\quad+\frac{59}{360(n+1)(n+2)(n+3)}+\frac{29}{60(n+1)(n+2)(n+3)(n+4)}+\ldots,
\end{split}
\end{displaymath}
as well as
\begin{displaymath}
\begin{split}
\sum_{k=1}^{n}\log(k)&=n\log(n)-n+\frac{1}{2}\log\left(2\pi\right)+\frac{1}{2}\log(n)+\frac{1}{12n}+\sum_{k=1}^{\infty}\frac{(-1)^{k}\sum_{l=1}^{k}\frac{(-1)^{l}}{(l+1)(l+2)}B_{l+2}S_{k}^{(1)}(l)}{n(n+1)(n+2)\cdots(n+k)}\\
&=n\log(n)-n+\frac{1}{2}\log\left(2\pi\right)+\frac{1}{2}\log(n)+\frac{1}{12n}-\frac{1}{360n(n+1)(n+2)}\\
&\quad-\frac{1}{120n(n+1)(n+2)(n+3)}-\frac{5}{168n(n+1)(n+2)(n+3)(n+4)}\\
&\quad-\frac{11}{84n(n+1)(n+2)(n+3)(n+4)(n+5)}-\ldots
\end{split}
\end{displaymath}
and
\begin{displaymath}
\begin{split}
\sum_{k=1}^{n}\log(k)&=n\log(n)-n+\frac{1}{2}\log\left(2\pi\right)+\frac{1}{2}\log(n)+n\sum_{k=1}^{\infty}\frac{(-1)^{k}\sum_{l=2}^{k}\frac{(-1)^{l}}{l(l-1)}B_{l}S_{k}^{(1)}(l)
}{(n+1)(n+2)\cdots(n+k)}\\
&=n\log(n)-n+\frac{1}{2}\log\left(2\pi\right)+\frac{1}{2}\log(n)+\frac{n}{12(n+1)(n+2)}+\frac{n}{4(n+1)(n+2)(n+3)}\\
&\quad+\frac{329n}{360(n+1)(n+2)(n+3)(n+4)}+\frac{149n}{36(n+1)(n+2)(n+3)(n+4)(n+5)}+\ldots.
\end{split}
\end{displaymath}}

\item[11.)]{{\bf First logarithmic summation formulas:}\newline
\noindent For every natural number $n\in\mathbb{N}$, we have
\begin{displaymath}
\begin{split}
\sum_{k=0}^{n}k\log(k)&=\frac{1}{2}n^{2}\log(n)-\frac{1}{4}n^{2}+\frac{1}{2}n\log(n)+\frac{1}{12}\log(n)+\frac{1}{12}-\zeta^{'}(-1)\\
&\quad+\sum_{k=1}^{\infty}(-1)^{k+1}\frac{\sum_{l=1}^{k}\frac{(-1)^{l}}{l(l+1)(l+2)}B_{l+2}S_{k}^{(1)}(l)}{(n+1)(n+2)\cdots(n+k)}\\
&=\frac{1}{2}n^{2}\log(n)-\frac{1}{4}n^{2}+\frac{1}{2}n\log(n)+\frac{1}{12}\log(n)+\frac{1}{12}-\zeta^{'}(-1)+\frac{1}{720(n+1)(n+2)}\\
&\quad+\frac{1}{240(n+1)(n+2)(n+3)}+\frac{19}{1260(n+1)(n+2)(n+3)(n+4)}\\
&\quad+\frac{17}{252(n+1)(n+2)(n+3)(n+4)(n+5)}+\ldots
\end{split}
\end{displaymath}
\noindent and
\begin{displaymath}
\begin{split}
\sum_{k=0}^{n}k\log(k)&=\frac{1}{2}n^{2}\log(n)-\frac{1}{4}n^{2}+\frac{1}{2}n\log(n)+\frac{1}{12}\log(n)+\frac{1}{12}-\zeta^{'}(-1)\\
&\quad+\sum_{k=1}^{\infty}(-1)^{k+1}\frac{\sum_{l=1}^{k}\frac{(-1)^{l}}{(l+1)(l+2)(l+3)}B_{l+3}S_{k}^{(1)}(l)}{n(n+1)(n+2)\cdots(n+k)}\\
&=\frac{1}{2}n^{2}\log(n)-\frac{1}{4}n^{2}+\frac{1}{2}n\log(n)+\frac{1}{12}\log(n)+\frac{1}{12}-\zeta^{'}(-1)+\frac{1}{720n(n+1)}\\
&\quad+\frac{1}{720n(n+1)(n+2)}+\frac{13}{5040n(n+1)(n+2)(n+3)}\\
&\quad+\frac{1}{140n(n+1)(n+2)(n+3)(n+4)}+\ldots.
\end{split}
\end{displaymath}
}

\item[12.)]{{\bf Second logarithmic summation formula:}\newline
\noindent For every natural number $n\in\mathbb{N}$, we have
\begin{displaymath}
\begin{split}
\sum_{k=1}^{n}\frac{\log(k)}{k}&=\frac{1}{2}\log(n)^{2}+\frac{\log(n)}{2n}+\gamma_{1}+\sum_{k=1}^{\infty}(-1)^{k}\frac{\sum_{l=1}^{k}\frac{(-1)^{l}}{(l+1)!}B_{l+1}S_{l+1}^{(1)}(2)S_{k}^{(1)}(l)}{n(n+1)(n+2)\cdots(n+k)}\\
&\quad+\log(n)\sum_{k=1}^{\infty}\frac{(-1)^{k}\sum_{l=1}^{k}\frac{1}{l+1}B_{l+1}S_{k}^{(1)}(l)}{n(n+1)(n+2)\cdots(n+k)}\\
&=\frac{1}{2}\log(n)^{2}+\frac{\log(n)}{2n}+\gamma_{1}+\frac{1}{12n(n+1)}+\frac{1}{12n(n+1)(n+2)}\\
&\quad+\frac{109}{720n(n+1)(n+2)(n+3)}+\frac{49}{120n(n+1)(n+2)(n+3)(n+4)}+\ldots\\
&\quad-\frac{\log(n)}{12n(n+1)}-\frac{\log(n)}{12n(n+1)(n+2)}-\frac{19\log(n)}{120n(n+1)(n+2)(n+3)}\\
&\quad-\frac{9\log(n)}{20n(n+1)(n+2)(n+3)(n+4)}-\ldots.
\end{split}
\end{displaymath}}

\newpage

\item[13.)]{{\bf Third logarithmic summation formula (Summation formula for the $n$-th partial sum of $\zeta'(2)$):}\newline
\noindent For every natural number $n\in\mathbb{N}$, we have
\begin{displaymath}
\begin{split}
\sum_{k=1}^{n}\frac{\log(k)}{k^{2}}&=-\zeta^{'}(2)-\frac{\log(n)}{n}-\frac{1}{n}+\frac{\log(n)}{2n^{2}}+\frac{1}{n}\sum_{k=1}^{\infty}(-1)^{k+1}\frac{\sum_{l=1}^{k}\frac{1}{l+1}\left(\sum_{m=0}^{l-1}\frac{m+1}{l-m}\right)B_{l+1}S_{k}^{(1)}(l)}{n(n+1)(n+1)\cdots(n+k)}\\
&\quad+\frac{\log(n)}{n}\sum_{k=1}^{\infty}\frac{(-1)^{k}\sum_{l=1}^{k}B_{l+1}S_{k}^{(1)}(l)}{n(n+1)(n+2)\cdots(n+k)}\\
&=-\zeta^{'}(2)-\frac{\log(n)}{n}-\frac{1}{n}+\frac{\log(n)}{2n^{2}}+\frac{1}{12n^{2}(n+1)}+\frac{1}{12n^{2}(n+1)(n+2)}\\
&\quad+\frac{47}{360n^{2}(n+1)(n+2)(n+3)}+\frac{17}{60n^{2}(n+1)(n+2)(n+3)(n+4)}+\ldots\\
&\quad-\frac{\log(n)}{6n^{2}(n+1)}-\frac{\log(n)}{6n^{2}(n+1)(n+2)}-\frac{3\log(n)}{10n^{2}(n+1)(n+2)(n+3)}\\
&\quad-\frac{4\log(n)}{5n^{2}(n+1)(n+2)(n+3)(n+4)}-\ldots.
\end{split}
\end{displaymath}}

\item[14.)]{{\bf Fourth logarithmic summation formula:}\newline
\noindent For every natural number $n\in\mathbb{N}$, we have
\begin{displaymath}
\begin{split}
\sum_{k=1}^{n}\log(k)^2&=n\log(n)^{2}-2n\log(n)+2n+\frac{1}{2}\log(n)^{2}+\frac{\log(n)}{6n}+\frac{\gamma^{2}}{2}-\frac{\pi^{2}}{24}-\frac{\log(2)^{2}}{2}\\
&\quad-\log(2)\log(\pi)-\frac{\log(\pi)^{2}}{2}+\gamma_{1}+2\sum_{k=1}^{\infty}\frac{(-1)^{k}\sum_{l=1}^{k}\frac{(-1)^{l}}{(l+2)!}B_{l+2}S_{l+1}^{(1)}(2)S_{k}^{(1)}(l)}{n(n+1)(n+2)\cdots(n+k)}\\
&\quad+2\log(n)\sum_{k=1}^{\infty}\frac{(-1)^{k}\sum_{l=1}^{k}\frac{1}{(l+1)(l+2)}B_{l+2}S_{k}^{(1)}(l)}{n(n+1)(n+2)\cdots(n+k)}\\
&=n\log(n)^{2}-2n\log(n)+2n+\frac{1}{2}\log(n)^{2}+\frac{\log(n)}{6n}+\frac{\gamma^{2}}{2}-\frac{\pi^{2}}{24}-\frac{\log(2)^{2}}{2}\\
&\quad-\log(2)\log(\pi)-\frac{\log(\pi)^{2}}{2}+\gamma_{1}+\frac{1}{120n(n+1)(n+2)}\\
&\quad+\frac{1}{40n(n+1)(n+2)(n+3)}+\frac{167}{1890n(n+1)(n+2)(n+3)(n+4)}\\
&\quad+\frac{145}{378n(n+1)(n+2)(n+3)(n+4)(n+5)}+\ldots\\
&\quad-\frac{\log(n)}{180n(n+1)(n+2)}-\frac{\log(n)}{60n(n+1)(n+2)(n+3)}\\
&\quad-\frac{5\log(n)}{84n(n+1)(n+2)(n+3)(n+4)}-\frac{11\log(n)}{42n(n+1)(n+2)(n+3)(n+4)(n+5)}-\ldots.
\end{split}
\end{displaymath}}

\item[15.)]{{\bf Summation formulas for the $n$-th partial sum of the Gregory-Leibniz series:}\newline
\noindent For every natural number $n\in\mathbb{N}_{0}$, we have that
\begin{displaymath}
\begin{split}
\sum_{k=0}^{n}\frac{(-1)^k}{2k+1}&=\frac{\pi}{4}-\frac{(-1)^{n+1}}{4(n+1)}+\frac{(-1)^{n+1}}{4}\sum_{k=1}^{\infty}\frac{(-1)^{k+1}\sum_{l=1}^{k}\frac{(-1)^{l}}{2^{l}}E_{l}S_{k}^{(1)}(l)}{(n+1)(n+2)\cdots(n+k+1)}\\
&=\frac{\pi}{4}-\frac{(-1)^{n+1}}{4(n+1)}+\frac{(-1)^{n+1}}{16(n+1)(n+2)(n+3)}+\frac{3(-1)^{n+1}}{16(n+1)(n+2)(n+3)(n+4)}\\
&\quad+\frac{39(-1)^{n+1}}{64(n+1)(n+2)(n+3)(n+4)(n+5)}\\
&\quad+\frac{75(-1)^{n+1}}{32(n+1)(n+2)(n+3)(n+4)(n+5)(n+6)}\\
&\quad+\ldots
\end{split}
\end{displaymath}
and for all $n\in\mathbb{N}$ that
\begin{displaymath}
\begin{split}
\sum_{k=1}^{n}\frac{(-1)^{k+1}}{2k-1}&=\frac{\pi}{4}-\frac{(-1)^{n}}{4n}+\frac{(-1)^{n}}{4}\sum_{k=1}^{\infty}\frac{(-1)^{k+1}\sum_{l=1}^{k}\frac{(-1)^{l}}{2^{l}}E_{l}S_{k}^{(1)}(l)}{n(n+1)(n+2)\cdots(n+k)}\\
&=\frac{\pi}{4}-\frac{(-1)^{n}}{4n}+\frac{(-1)^{n}}{16n(n+1)(n+2)}+\frac{3(-1)^{n}}{16n(n+1)(n+2)(n+3)}\\
&\quad+\frac{39(-1)^{n}}{64n(n+1)(n+2)(n+3)(n+4)}+\frac{75(-1)^{n}}{32n(n+1)(n+2)(n+3)(n+4)(n+5)}+\ldots.
\end{split}
\end{displaymath}}

\item[16.)]{{\bf Summation formula for the $n$-th partial sum of the alternating harmonic series:}\newline
\noindent For every natural number $n\in\mathbb{N}$, we have\begin{displaymath}
\begin{split}
\sum_{k=1}^{n}\frac{(-1)^{k+1}}{k}&=\log(2)-\frac{(-1)^{n}}{2n}+(-1)^{n}\sum_{k=1}^{\infty}(-1)^{k}\frac{\sum_{l=1}^{k}(-1)^{\frac{l^2+l}{2}}\frac{\left(2^{l+1}-1\right)}{l+1}\left|B_{l+1}\right|S_{k}^{(1)}(l)}{n(n+1)(n+2)\cdots(n+k)}\\
&=\log(2)-\frac{(-1)^{n}}{2n}+\frac{(-1)^{n}}{4n(n+1)}+\frac{(-1)^{n}}{4n(n+1)(n+2)}+\frac{3(-1)^{n}}{8n(n+1)(n+2)(n+3)}\\
&\quad+\frac{3(-1)^{n}}{4n(n+1)(n+2)(n+3)(n+4)}+\ldots.
\end{split}
\end{displaymath}
}
\end{itemize}

\noindent To obtain the formulas $15.)$ and $16.)$, we have used the references \cite{13,18,19,20} and the Boole summation formula \cite{13,18}, which is the analog of the Euler-Maclaurin summation formula for alternating sums.

\section{Conclusion}
\label{sec:Conclusion}

We have proved many rapidly convergent summation formulas for different sums of functions. These formulas are highly useful, because of their fast convergence and their connections to many different special functions, like for example the polygamma functions. Perhaps one can implement these formulas in a future version of Mathematica to compute for example the digamma function via the formula
\begin{displaymath}
\psi(x)=\log(x)-\frac{1}{2x}+\sum_{k=1}^{\infty}\frac{(-1)^{k+1}\sum_{l=1}^{k}\frac{(-1)^{l}}{l+1}B_{l+1}S_{k}^{(1)}(l)}{x(x+1)(x+2)\cdots(x+k)}.
\end{displaymath}
Computing for example $\psi(10^{10})$ up to $400$ correct decimal digits with the current version of Mathematica $10$ takes more than $10$ minutes on our home computer, but with the above formula, we can do it in less than $3$ seconds.\newline
\noindent With our technique, one can obtain much more such summation formulas for finite sums of the form $\sum_{k=1}^{n}f(k)$, where $n\in\mathbb{N}$ and $f$ is any function, showing again that this universal technique for obtaining such formulas is highly suitable for computer implementations.

\bigskip
\hrule
\bigskip

\noindent 2010 {\it Mathematics Subject Classification}: Primary 40G99; Secondary 40A05.

\noindent\emph{Keywords:} Rapidly convergent summation formulas for finite sums of functions involving Stirling series; Inverse factorial series; Stirling series; Stirling summation formulas; Convergent version of the Euler-Maclaurin summation formula; Convergent version of the Boole summation formula; Weniger transformation; Stirling numbers of the first kind; Stirling numbers of the second kind; Bernoulli numbers; Euler numbers; tangent numbers; Gregory numbers; Convergent versions of Stirling's formula; Convergent formulas for the harmonic series; Gregory-Leibniz series; alternating harmonic series.

\end{document}